\newtheorem{theorem}{Theorem}
\newtheorem{lemma}[theorem]{Lemma}
\newtheorem{proposition}[theorem]{Proposition}
\theoremstyle{definition}
\newtheorem{definition}[theorem]{Definition}
\numberwithin{equation}{section}
\newcommand{\dbar}  {\bar \partial}
\newcommand{\tensor}{\otimes}
\newcommand{\CC}{\mathbb{C}}
\newcommand{\RR}{\mathbb{R}}
  \newcommand{\A}{\mathcal{A}}
  \newcommand{\D}{\mathcal{D}}
\renewcommand{\O}{\mathcal{O}}
\DeclareMathOperator{\degree}{degree}
\DeclareMathOperator{\End}{End}
\DeclareMathOperator{\id}{id}
\DeclareMathOperator{\pardeg}{par-deg}
\DeclareMathOperator{\parmu}{par-\mu}
\DeclareMathOperator{\rank}{rank}
\DeclareMathOperator{\Res}{Res}
\renewcommand{\leq}{\leqslant}
\renewcommand{\geq}{\geqslant}
\newcommand*{\longhookrightarrow}{\ensuremath{\lhook\joinrel\relbar\joinrel\rightarrow}}
\begin{document}

\baselineskip=15pt

\title[Orthogonal and symplectic parabolic Higgs bundles]{Hermitian-Einstein
connections on polystable orthogonal and symplectic parabolic Higgs bundles}

\author[I. Biswas]{Indranil Biswas}

\address{School of Mathematics, Tata Institute of Fundamental
Research, Homi Bhabha Road, Bombay 400005, India}

\email{indranil@math.tifr.res.in}

\author[M. Stemmler]{Matthias Stemmler}

\email{stemmler@math.tifr.res.in}

\subjclass[2000]{53C07, 14H60}

\keywords{Hermitian-Einstein connection,
orthogonal and symplectic bundle,
parabolic Higgs bundle}

\date{}

\begin{abstract}
Let $X$ be a smooth complex projective curve and 
$S \subset X$ a finite subset. We show that an orthogonal or 
symplectic parabolic Higgs bundle on $X$ with parabolic structure
over $S$ admits a Hermitian-Einstein connection if and only if it is 
polystable.
\end{abstract}

\maketitle

\section{Introduction}

Let $X$ be an irreducible smooth complex projective curve, and let 
$S \subset X$ be a fixed finite subset. The notion of parabolic 
vector bundles on $X$ with $S$ as the parabolic divisor was 
introduced by Seshadri \cite{Se77}. Parabolic bundles equipped 
with Higgs fields were introduced by Simpson \cite{Si90} under the 
name of filtered regular Higgs bundles; see also \cite{Yo93}.

An orthogonal or symplectic parabolic bundle is a parabolic vector 
bundle equipped with a symmetric or alternating form, 
respectively, with values in a parabolic line bundle; this form
is required to be non-degenerate in a suitable sense \cite{BMW11}.
In the case of rational 
parabolic weights, this coincides with the notion of parabolic 
principal $G$-bundles introduced in \cite{BBN01} and \cite{BBN03}, 
where $G$ is the orthogonal or symplectic group, respectively.

In \cite{BMW11}, orthogonal and symplectic parabolic bundles were 
investigated. In particular, a Hitchin-Kobayashi correspondence,
which says that an 
orthogonal or symplectic parabolic bundle admits a 
Hermitian-Einstein connection if and only if it is polystable,
was established.
Our aim here is to define Higgs fields on orthogonal and symplectic parabolic bundles and to generalize the Hitchin-Kobayashi correspondence to this context. We obtain the following result (see Theorem \ref{result} and Proposition \ref{converse}):

\begin{theorem}
Let $(E_\ast\, , \varphi\, , \theta)$ be an orthogonal or symplectic
parabolic Higgs bundle. If $(E_\ast\, , \varphi\, , \theta)$ is
polystable, then it admits a Hermitian-Einstein connection.

Conversely, if $(E_\ast\, , \varphi\, , \theta)$ admits a
Hermitian-Einstein connection lying in the space $\A$ (see
\eqref{poritz}), then it is polystable.
\end{theorem}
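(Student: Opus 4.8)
The plan is to reduce the whole statement to the already established Hitchin-Kobayashi correspondence for parabolic Higgs bundles (the vector bundle case, with a Higgs field) by passing to the underlying parabolic Higgs bundle $(E_\ast\, ,\theta)$, and then to recover the extra structure by a uniqueness argument. The first ingredient I would isolate is the following comparison: $(E_\ast\, ,\varphi\, ,\theta)$ is polystable as an orthogonal (respectively, symplectic) parabolic Higgs bundle if and only if the underlying parabolic Higgs bundle $(E_\ast\, ,\theta)$ is polystable. The ``only if'' part is the standard argument: given a parabolic Higgs subbundle $V_\ast \subset E_\ast$, its orthogonal complement $V_\ast^{\perp}$ (with respect to $\varphi$, which takes values in the parabolic line bundle $L_\ast$) is again a parabolic Higgs subbundle, $\pardeg V_\ast + \pardeg V_\ast^{\perp} = \rank(V_\ast)\cdot\pardeg L_\ast + (\text{terms measuring }V_\ast\cap V_\ast^{\perp})$, and playing $V_\ast$ off against $V_\ast^{\perp}$ forces semistability and, in the strict case, a $\varphi$-compatible splitting; the ``if'' part uses that $\varphi$ realizes $E_\ast$ as self-dual (up to the twist by $L_\ast$), so any parabolic Higgs reduction of $(E_\ast\, ,\varphi\, ,\theta)$ is controlled by reductions of $(E_\ast\, ,\theta)$.

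For the forward direction (this is Theorem \ref{result}): assuming $(E_\ast\, ,\varphi\, ,\theta)$ is polystable, the comparison gives that $(E_\ast\, ,\theta)$ is polystable, so by the Hitchin-Kobayashi correspondence for parabolic Higgs bundles it carries a Hermitian-Einstein metric $h$. Fix a Hermitian-Einstein metric $h_L$ on the parabolic line bundle $L_\ast$ (this is immediate, $L_\ast$ having rank one). The form $\varphi$ furnishes an isomorphism of parabolic Higgs bundles $\Phi \colon (E_\ast\, ,\theta)\ \xrightarrow{\ \sim\ }\ (E_\ast^{\vee}\tensor L_\ast\, ,\theta^{\vee}\tensor\id)$, and transporting $h^{\vee}\tensor h_L$ back along $\Phi$ produces a second Hermitian-Einstein metric $\sigma(h)$ on $(E_\ast\, ,\theta)$. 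The assignment $h\mapsto\sigma(h)$ is an involution of the space of Hermitian-Einstein metrics on $(E_\ast\, ,\theta)$, and a Hermitian-Einstein connection compatible with $\varphi$ — i.e. lying in $\A$ — exists exactly when $\sigma$ has a fixed point. Since $(E_\ast\, ,\theta)$ is polystable, this space of Hermitian-Einstein metrics is a single orbit of the group of positive $\theta$-commuting self-adjoint automorphisms, which is a product of nonpositively curved symmetric spaces (one factor per stable isotypical component); $\sigma$ acts on it compatibly with that structure, so a fixed point exists — concretely, decompose $(E_\ast\, ,\theta)$ into stable summands, observe that $\sigma$ permutes the isomorphism classes, and match each summand with its $\sigma$-partner. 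The resulting metric gives the desired Hermitian-Einstein connection.

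For the converse (Proposition \ref{converse}): if $(E_\ast\, ,\varphi\, ,\theta)$ admits a Hermitian-Einstein connection in $\A$, then forgetting $\varphi$ gives a Hermitian-Einstein metric on the underlying parabolic Higgs bundle $(E_\ast\, ,\theta)$, which is therefore polystable by the (other direction of the) Hitchin-Kobayashi correspondence for parabolic Higgs bundles; the comparison above then upgrades this to polystability of $(E_\ast\, ,\varphi\, ,\theta)$.

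The main obstacle is the compatibility step in the forward direction: passing from the a priori non-unique Hermitian-Einstein metric on the underlying polystable bundle to one adapted to $\varphi$. This requires a clean description of the space of Hermitian-Einstein metrics on a polystable parabolic Higgs bundle (its transitive structure group and its nonpositive curvature, or an explicit stable-component decomposition), a verification that the involution $\sigma$ built from $\varphi$ and $h_L$ genuinely preserves this space and respects its geometry, and a fixed-point argument — all while keeping track of the parabolic data (weights, the parabolic filtrations over $S$, the residues of $\theta$, and the precise non-degeneracy condition imposed on $\varphi$ in the parabolic sense) so that the fixed metric is both strongly adapted to the parabolic structure and compatible with $\varphi$ as an orthogonal ($\sigma^2=\id$) or symplectic structure, and so that the associated connection indeed lies in the prescribed space $\A$. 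Getting the signs right to separate the orthogonal and symplectic cases, and confirming the fixed point lands in $\A$, is where the care is needed; the rest is bookkeeping on top of the cited parabolic Higgs bundle correspondence.
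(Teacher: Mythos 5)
Your overall strategy for the forward direction --- reduce to the underlying parabolic Higgs bundle via Proposition \ref{polystable}, invoke Simpson's existence theorem, then recover compatibility with $\varphi$ --- starts out parallel to the paper, but at the crucial compatibility step you take a genuinely different and much harder route, and that is where the gap lies. You treat the problem at the level of \emph{metrics}: you build an involution $\sigma$ on the space of Hermitian--Einstein metrics and argue for a fixed point via a claimed description of that space as a homogeneous, nonpositively curved object. None of that structural description is proved, and in the noncompact setting of $X \setminus S$ it is not an off-the-shelf fact: you would need transitivity of the $\theta$-commuting automorphism group on the set of Hermitian--Einstein metrics mutually bounded with the background metric, completeness of the relevant symmetric-space structure, a check that $\sigma$ preserves the mutual-boundedness class, and a Cartan-type fixed-point theorem --- each a nontrivial lemma. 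The point you are missing is that the definition of a Hermitian--Einstein connection on $(E_\ast\,,\varphi\,,\theta)$ constrains only the \emph{connection}, not the metric, and while the Hermitian--Einstein metric is not unique, the associated Chern connection is: this is Proposition \ref{uniqueness} in the paper, proved by applying a Weitzenb\"ock/vanishing argument (Lemma \ref{parallel}, resting on Simpson's property \eqref{laplace-condition}) to the bounded holomorphic section $\id_E$ of $\End(E)$. Granting that, no fixed point is needed: the metric $h' = h_L \tensor h^{\vee}$ and the metric $h''$ obtained by pushing $h$ forward along $\widetilde\varphi$ are both Hermitian--Einstein on $(L_\ast \tensor E_\ast^\ast\,,\widetilde\theta)$ and both mutually bounded with the background metric, so their Chern connections coincide, which is exactly the compatibility of $D$ with $\varphi$. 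Your involution $\sigma$ is the composite of these two constructions, and the uniqueness statement says precisely that $\sigma$ fixes the \emph{connection} of every admissible metric, even if it moves the metric itself.

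Two further points. First, your ``comparison'' is asserted as an equivalence, but the paper proves only the direction you call ``only if'' (orthogonal/symplectic polystable $\Rightarrow$ underlying polystable, Proposition \ref{polystable}, via uniqueness of the Harder--Narasimhan and socle filtrations transported through $\widetilde\varphi$); your sketch via orthogonal complements is plausible for semistability but the ``$\varphi$-compatible splitting in the strict case'' is exactly the content that requires the socle argument, and you do not supply it. Second, your converse relies essentially on the unproved ``if'' direction (underlying polystable $\Rightarrow$ orthogonal/symplectic polystable); since polystability of $(E_\ast\,,\varphi\,,\theta)$ is \emph{defined} by the existence of a decomposition into $\varphi$-stable and hyperbolic pieces $V_\ast \oplus (L_\ast \tensor V_\ast^\ast)$, producing such a decomposition from a plain polystable one is a real argument, not bookkeeping. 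The paper avoids this entirely by adapting Poritz's degree estimate for connections in $\A$ directly to isotropic $\theta$-invariant subbundles, which tests exactly the subbundles appearing in Definition \ref{ds}.
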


\section{Orthogonal and symplectic parabolic Higgs bundles}

\subsection{Parabolic vector bundles}

Let $X$ be an irreducible smooth complex projective curve. Fix a finite subset
\[
  S \,:=\, \{ x_1\, , \cdots\, , x_n \} \,\subset\, X
\]
with distinct points $x_1, \cdots, x_n$ of $X$.
Let $E$ be a holomorphic vector bundle on $X$. Recall that a {\em quasi-parabolic structure\/} on $E$ over $S$ is a filtration of subspaces
\begin{equation}\label{filt}
  E_{x_i} = F_{i,1} \supsetneq \cdots \supsetneq F_{i,j} \supsetneq \cdots \supsetneq F_{i,\ell_i} \supsetneq F_{i,\ell_i+1} = 0
\end{equation}
over each point $x_i$ of $S$. A {\em parabolic structure\/} on $E$ over $S$ is a quasi-parabolic structure as above together with real numbers
\[
  0 \leq \alpha_{i,1} < \cdots < \alpha_{i,j} < \cdots < \alpha_{i,\ell_i} < 1\, ,
\]
which are called the {\em parabolic weights}. The weight $\alpha_{i,j}$ corresponds to the subspace $F_{i,j}$. (See \cite{Se77}, \cite[p.\ 67]{Se82}, \cite{MY92}.)
A {\em parabolic vector bundle\/} with parabolic divisor $S$ is a holomorphic vector bundle $E$ equipped with a quasi-parabolic structure over $S$ and parabolic weights as above.

For convenience, a parabolic vector bundle $(E, \{ F_{i,j} \}, \{ \alpha_{i,j} \})$ will be denoted by $E_\ast$.

We fix the divisor $S$ once and for all. Henceforth, the 
parabolic divisor for all parabolic vector bundles will be 
this $S$.

The {\em parabolic degree\/} of a parabolic vector bundle $E_\ast$ is defined to be
\[
  \pardeg(E_\ast) \,:= \, \degree(E) + \sum_{i=1}^n \sum_{j=1}^{\ell_i}
\alpha_{i,j}\cdot\dim(F_{i,j}/F_{i,j+1})
\]
and the real number
\[
  \parmu(E_\ast) \,:=\, \frac{\pardeg(E_\ast)}{\rank(E)}
\]
is called the {\em parabolic slope\/} of $E_\ast$.

See \cite{Bi97}, \cite{Yo95} for 
tensor product, dual and homomorphism bundles for parabolic
bundles.

\subsection{Orthogonal and symplectic structures}

Fix a parabolic line bundle $L_\ast$. The underlying holomorphic
line bundle will be denoted by $L$.

Let $E_\ast$ be a parabolic vector bundle, and let
\[
  \varphi\,:\, E_\ast \tensor E_\ast \,\longrightarrow \,L_\ast
\]
be a homomorphism of parabolic bundles. Tensoring both sides of this homomorphism with the parabolic dual $E_\ast^\ast$, we obtain a homomorphism
\[
  \varphi \tensor \id\,:\, E_\ast \tensor E_\ast \tensor 
E_\ast^\ast \,\longrightarrow\, L_\ast \tensor E_\ast^\ast\, .
\] 
Note that the sheaf of sections of the vector bundle underlying 
$E_\ast \tensor E_\ast^\ast$ is the sheaf of endomorphisms of $E$ 
preserving the quasi-parabolic filtrations. The trivial line 
bundle $\O_X$ equipped with the trivial parabolic structure 
(meaning there is no non-zero parabolic weight) is realized as a 
parabolic subbundle of $E_\ast \tensor E_\ast^\ast$ by sending any 
locally defined function $f$ to the locally defined endomorphism 
of $E$ given by pointwise multiplication with $f$. Let
\begin{equation} \label{isomorphism}
  \widetilde \varphi\,:\, E_\ast \,\longrightarrow \,
L_\ast \tensor E_\ast^\ast
\end{equation}
be the homomorphism defined by the composition
\[
  E_\ast \,=\, E_\ast \tensor \O_X \,\longhookrightarrow \,
E_\ast \tensor (E_\ast \tensor E_\ast^\ast)
\,=\, (E_\ast \tensor E_\ast) \tensor E_\ast^\ast
\,\stackrel{\varphi \tensor \id}{\longrightarrow} \,
L_\ast \tensor E_\ast^\ast\, .
\]

\begin{definition}\mbox{}
\begin{enumerate}
\item[(i)] An {\em 
orthogonal parabolic bundle\/} is a pair $(E_\ast\, , \varphi)$ of 
the above form such that $\varphi$ is symmetric, and the 
homomorphism $\widetilde \varphi$ in \eqref{isomorphism}
is an isomorphism.

\item[(ii)] A {\em symplectic parabolic bundle\/} is a pair 
$(E_\ast\, , \varphi)$ of the above form such that $\varphi$ is 
anti-symmetric, and the homomorphism $\widetilde \varphi$ is an 
isomorphism.
\end{enumerate}
\end{definition}

Let $(E_\ast\, , \varphi)$ be an orthogonal or symplectic
parabolic bundle with $E$ as the underlying vector bundle. Then
$E\otimes E$ is a coherent
subsheaf of the vector bundle underlying the parabolic tensor
product $E_\ast\otimes E_\ast$. Therefore, $\varphi$ produces
a homomorphism
\begin{equation}\label{issu2}
\widehat{\varphi}\, :\, E\otimes E\, \longrightarrow\, L\, ,
\end{equation}
where $L$ is the holomorphic line bundle underlying $L_\ast$.
A holomorphic subbundle
$$
F\, \subset\, E
$$
is called \textit{isotropic\/} if
\begin{equation}\label{issu}
\widehat{\varphi}(F\otimes F) \, =\, 0\, ,
\end{equation}
where $\widehat{\varphi}$ is constructed in \eqref{issu2}.

\subsection{Higgs fields}

Let $\Omega_X$ be the canonical line bundle of $X$. For notational
convenience, we write $\Omega_X(S) \,:=\,
\Omega_X \tensor \O_X(S)$. Let $E$ be a holomorphic vector bundle on $X$. A {\em logarithmic Higgs field\/} on $E$ is a holomorphic section
\[
  \theta \,\in\, H^0(X,\, \End(E) \tensor \Omega_X(S))\, .
\]

For every point $x_i \in S$, the fiber $(\Omega_X(S))_{x_i}$ is identified with $\CC$ using the Poincar\'e adjunction formula. The endomorphism
\[
  E_{x_i} \,\stackrel{\theta(x_i)}{\longrightarrow}\,
(E \tensor \Omega_X(S))_{x_i} \,=\, E_{x_i}
\]
is called the {\em residue\/} of $\theta$ at $x_i$; it will be denoted by $\Res(\theta, x_i)$.

\begin{definition}\label{d-oH}
\mbox{}
\begin{enumerate}
\item[(i)] Let $E_\ast$ be a parabolic vector bundle on $X$. A {\em parabolic Higgs field\/} on $E_\ast$ is a logarithmic Higgs field
\[
  \theta \,\in\, H^0(X, \,\End(E) \tensor \Omega_X(S))
\]
such that for every point $x_i \in S$, the residue $\Res(\theta, 
x_i)$ preserves the quasi-parabolic filtration in the sense that
\[
  \Res(\theta, x_i)(F_{i,j}) \,\subset \,
F_{i,j} \quad \text{for all } 1 \leq j \leq \ell_i
\]
(see \eqref{filt}).
\item[(ii)] A {\em parabolic Higgs bundle\/} is a pair $(E_\ast\, ,
\theta)$ consisting of a parabolic vector bundle $E_\ast$ and a parabolic Higgs field $\theta$ on $E_\ast$.
\end{enumerate}
\end{definition}

\begin{lemma} \label{higgs}
Let $E_\ast$ and $F_\ast$ be parabolic vector bundles
equipped with parabolic Higgs fields $\theta_E$ and
$\theta_F$, respectively. Then $\theta_E$ and $\theta_F$ together
induce a parabolic Higgs field on the parabolic tensor
product $E_\ast \tensor F_\ast$. Also, $\theta_E$ induces a
parabolic Higgs field on the parabolic dual $E_\ast^\ast$.
\end{lemma}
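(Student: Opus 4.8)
The plan is to construct the Higgs fields on $E_\ast \tensor F_\ast$ and $E_\ast^\ast$ by the standard formulas and then verify the two conditions that constitute being a parabolic Higgs field: that the endomorphism-valued form is genuinely a section over all of $X$ (including over $S$, where the parabolic tensor product differs from the naive tensor product), and that its residue at each $x_i \in S$ preserves the relevant quasi-parabolic filtration.

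First I would recall that, away from $S$, the parabolic tensor product $E_\ast \tensor F_\ast$ agrees with $E \tensor F$, and on this open set one simply sets $\theta_{E \tensor F} := \theta_E \tensor \id_F + \id_E \tensor \theta_F$, a section of $\End(E \tensor F) \tensor \Omega_X(S)$ over $X \setminus S$. Similarly, on $E^\ast$ over $X \setminus S$ one sets $\theta_{E^\ast} := -\theta_E^\ast$ (the negative transpose), where the transpose is taken fiberwise after the canonical identification $\End(E^\ast) = \End(E)^\ast \cong \End(E)$. The substantive point is to check that these locally defined sections extend as holomorphic sections of $\End(\cdot) \tensor \Omega_X(S)$ of the underlying bundle of the parabolic tensor product (resp. dual). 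For this I would work in a local frame near a point $x_i$ adapted to the quasi-parabolic filtration, use the standard description (following \cite{Bi97}, \cite{Yo95}) of the parabolic tensor product and dual in terms of the associated filtered sheaves, and observe that the condition $\Res(\theta_E, x_i)(F_{i,j}) \subset F_{i,j}$ (and likewise for $\theta_F$) is exactly what guarantees that the Leibniz-type sum, when written in the local parabolic frame, has at worst a logarithmic pole — i.e. lies in $\End \tensor \Omega_X(S)$ rather than acquiring higher-order poles coming from the weight shifts.

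Next I would verify the residue condition. The residue of $\theta_{E \tensor F}$ at $x_i$ is $\Res(\theta_E, x_i) \tensor \id + \id \tensor \Res(\theta_F, x_i)$, and since each factor preserves the respective filtration, their tensor sum preserves the induced filtration on the fiber of the parabolic tensor product at $x_i$; this is a direct computation once one writes down the induced quasi-parabolic filtration on $(E \tensor F)_{x_i}$ in terms of the $F_{i,j}$ and the corresponding data for $F_\ast$. For the dual, the residue of $\theta_{E^\ast}$ is $-\Res(\theta_E, x_i)^\ast$, and one checks that the transpose of a filtration-preserving endomorphism preserves the annihilator filtration that defines the quasi-parabolic structure on $E_\ast^\ast$.

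I expect the main obstacle to be the extension/no-extra-poles verification in the local parabolic frame: one must be careful that, although passing from $E \tensor F$ to the parabolic tensor product $E_\ast \tensor F_\ast$ involves twisting by fractional powers of $\O_X(x_i)$ encoded combinatorially in the weights, the filtration-preservation hypothesis on the residues is precisely the condition preventing the Leibniz sum from developing a pole of order $\geq 2$. Once the local model is set up correctly this is a finite computation, but it is where all the content of the lemma resides; the residue-preservation checks are then essentially formal.
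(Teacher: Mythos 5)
Your proposal is correct and follows essentially the same route as the paper: the paper induces the logarithmic Higgs field on the ambient sheaves $E^\ast$ and $E \tensor F \tensor \O_X(S)$ and observes that it preserves the subsheaves underlying the parabolic dual and tensor product, declaring the verification ``straightforward to check,'' and your local-frame computation (filtration preservation of the residues prevents the Leibniz sum from leaving the parabolically twisted subsheaf) is precisely that check made explicit. The residue-preservation argument for the induced filtrations matches as well, so there is no substantive difference in approach.
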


\begin{proof}
The logarithmic Higgs field $\theta_E$ on the vector bundle $E$ 
underlying $E_\ast$ induces a logarithmic Higgs field on the dual 
vector bundle $E^\ast$; this logarithmic Higgs field on $E^\ast$
will be denoted by $\theta_E'$. Let $E_0^\ast$ be the vector bundle 
underlying the parabolic dual $E_\ast^\ast$. Then $E_0^\ast$ is a 
subsheaf of $E^\ast$. It is straightforward to check that the 
logarithmic Higgs field $\theta_E'$ on $E^\ast$ produces a 
logarithmic Higgs field on $E_0^\ast$. This
logarithmic Higgs field on $E_0^\ast$ is a parabolic Higgs 
field on the parabolic vector bundle $E_\ast^\ast$.

Let $F$ be the vector bundle underlying $F_\ast$.
The two logarithmic Higgs fields $\theta_E$ and $\theta_F$
on $E$ and $F$ respectively
together induce a logarithmic Higgs field on the vector bundle $E 
\tensor F \tensor \O_X(S)$ (the Higgs field on $\O_X(S)$ is taken 
to be the zero section).
The vector bundle $(E_\ast \tensor F_\ast)_0$ 
underlying the parabolic tensor product $E_\ast \tensor F_\ast$ is 
a subsheaf of $E \tensor F \tensor \O_X(S)$. It is straightforward 
to check that the above logarithmic Higgs field on $E \tensor F 
\tensor \O_X(S)$ produces a logarithmic Higgs field on $(E_\ast 
\tensor F_\ast)_0$. This logarithmic Higgs field on $(E_\ast
\tensor F_\ast)_0$ is a parabolic Higgs field on the 
parabolic vector bundle $E_\ast \tensor F_\ast$.
\end{proof}

\begin{definition}\label{compatible-1}
Let $(E_\ast\, , \varphi)$ be an orthogonal or symplectic parabolic
bundle. A parabolic Higgs field $\theta$ on $E_\ast$ is said
to be {\em compatible with $\varphi$\/} if the isomorphism
$\widetilde \varphi$ in \eqref{isomorphism} takes
$\theta$ to the parabolic Higgs field on
$L_\ast\tensor E_\ast^\ast$ induced by $\theta$ (the
Higgs field on $L_\ast$ is taken to be the zero section).
\end{definition}

We will explain the above definition of a compatible parabolic 
Higgs field. Consider the pairing $\widehat\varphi$ in 
\eqref{issu2}. Since a Higgs field $\theta$ on $E_\ast$ is a 
section of $\End(E) \tensor \Omega_X(S)$, for any
holomorphic sections $s$ and $t$ of $E$ defined over
an open subset $U\, \subset\, X$, we have
$$
{\widehat\varphi}_\theta(s\, ,t)\, :=\,
\widehat\varphi(\theta(s)\otimes t) +
\widehat\varphi(s\otimes \theta(t))\, \in \,\Gamma(U,\,
L \tensor \Omega_X(S))\, .
$$
The Higgs field $\theta$ is compatible with $\varphi$
if and only if ${\widehat\varphi}_\theta(s\, ,t)\, =\, 0$
for all $s$ and $t$.

\begin{definition}\label{compatible}
An {\em orthogonal (respectively, symplectic) parabolic Higgs 
bundle\/} $(E_\ast\, , \varphi\, , \theta)$ is an orthogonal (respectively, 
symplectic) parabolic bundle $(E_\ast\, , \varphi)$ together with a 
parabolic Higgs field $\theta$ on $E_\ast$ which is compatible 
with $\varphi$.
\end{definition}

\section{Polystability}

Given a parabolic vector bundle $E_\ast$ on $X$ and a holomorphic subbundle $F$ of the underlying vector bundle $E$, we obtain an induced parabolic structure on $F$ by restricting the quasi-parabolic filtrations and the parabolic weights of $E$ to $F$. Let $F_\ast$ be the parabolic vector bundle obtained this way.

\begin{definition}
Let $(E_\ast\, , \theta)$ be a parabolic Higgs bundle on $X$.
\begin{enumerate}
\item[(i)] $(E_\ast\, , \theta)$ is called {\em stable\/} (respectively, {\em semistable}) if for every subbundle $F\, \subset\, E$ with $0 < \rank(F) < \rank(E)$ such that $\theta(F)\, \subset\, F\otimes \Omega_X(S)$ (see Definition \ref{d-oH}(i)), the inequality
\[
  \parmu(F_\ast) < \parmu(E_\ast) \quad \text{(respectively, } \parmu(F_\ast) \leq \parmu(E_\ast) \text{)}
\]
holds.
\item[(ii)] $(E_\ast\, , \theta)$ is called {\em polystable\/} if it is semistable and isomorphic to a direct sum of stable parabolic Higgs bundles.
\end{enumerate}
\end{definition}

Let $(E_\ast\, , \varphi\, , \theta)$ be
an orthogonal or symplectic parabolic Higgs bundle.
As before, the holomorphic vector bundle underlying $E_\ast$
will be denoted by $E$.

\begin{definition}\label{ds}
The orthogonal or symplectic parabolic Higgs bundle
$(E_\ast\, , \varphi\, , \theta)$ will be called {\em stable\/}
(respectively, {\em semistable}) if for every isotropic
subbundle $F\, \subset\, E$ of positive rank (see \eqref{issu})
such that $\theta(F)\, \subset\, F\otimes \Omega_X(S)$
(see Definition \ref{d-oH}(i)), the following condition holds:
\[
  \parmu(F_\ast) < \parmu(E_\ast) \quad \text{(respectively, }
\parmu(F_\ast) \leq \parmu(E_\ast) \text{)}\, .
\]
\end{definition}

Take any parabolic vector bundle $V_\ast$ on $X$. Using the natural
pairing of $V_\ast$ with its parabolic dual $V^\ast_\ast$, the parabolic
vector bundle $V_\ast\oplus (L_\ast \otimes V^\ast_\ast)$ is
equipped with
a symplectic as well as an orthogonal form with values in $L_\ast$.
To explain this, note that for any finite dimensional complex
vector space $W_0$, we have
$$
(W_0\oplus W^\ast_0)\otimes (W_0\oplus W^\ast_0)\, =\,
\bigwedge\nolimits^2(W_0\oplus W^\ast_0) \oplus \text{Sym}^2(
W_0\oplus W^\ast_0)\, ,
$$
and $\id_{W_0\oplus W^\ast_0}\, \in\,
\text{End}(W_0\oplus W^\ast_0) \, =\,
(W_0\oplus W^\ast_0)\otimes (W^\ast_0\oplus W_0)\, =
\, (W_0\oplus W^\ast_0)\otimes (W_0\oplus W^\ast_0)$
projects to a non-degenerate element in both
$\bigwedge^2(W_0\oplus W^\ast_0)$ and $\text{Sym}^2(
W_0\oplus W^\ast_0)$. Both the symplectic and orthogonal forms
on $V_\ast\oplus (L_\ast \otimes V^\ast_\ast)$ with values in
$L_\ast$ will be denoted by $\varphi_{V_\ast}$.

Let $\theta_V$ be a Higgs field on the parabolic vector bundle
$V_\ast$. The Higgs field on $V^\ast_\ast$ given by
Lemma \ref{higgs} will be denoted
by $\theta^\ast_V$.
The zero Higgs field on $L_\ast$ and $\theta^\ast_V$ together
define a Higgs field on $L_\ast \otimes V^\ast_\ast$
by Lemma \ref{higgs}; this
Higgs field on $L_\ast \otimes V^\ast_\ast$ will be denoted
by $\theta^L_V$. We note that the parabolic Higgs field
$\theta_V\oplus \theta^L_V$ on $V_\ast\oplus (L_\ast \otimes
V^\ast_\ast)$ is compatible with $\varphi_{V_\ast}$, so
\begin{equation}\label{dpH}
(V_\ast\oplus (L_\ast \otimes V^\ast_\ast)\, ,\varphi_{V_\ast}
\, , \theta_V\oplus \theta^L_V)
\end{equation}
is an orthogonal or symplectic parabolic Higgs bundle (depending
on whether $\varphi_{V_\ast}$ is the natural
orthogonal or symplectic form).

\begin{definition}
A semistable orthogonal (respectively, symplectic) parabolic
Higgs bundle
$(E_\ast\, , \varphi\, , \theta)$ will be called {\em polystable\/}
if it is a direct sum of finitely
many orthogonal (respectively, symplectic) parabolic
Higgs bundles
$$
(E_\ast\, , \varphi\, , \theta)\, =\,
\bigoplus_{i=1}^N \, (E^i_\ast\, , \varphi^i\, , \theta^i)\, ,
$$
where each $(E^i_\ast\, , \varphi^i\, , \theta^i)$ is either
stable (see Definition \ref{ds}) or it is of the form
$$(V_\ast\oplus (L_\ast \otimes V^\ast_\ast)\, ,\varphi_{V_\ast}
\, , \theta_V\oplus \theta^L_V)$$ (see \eqref{dpH}) with
$(V_\ast\, ,\theta_V)$ being a polystable parabolic Higgs bundle.
\end{definition}

To compare the above definition with the definition of
polystable orthogonal and symplectic parabolic vector bundles 
(without Higgs structure) 
given in \cite{BMW11}, note that a direct sum of polystable 
parabolic orthogonal (respectively, symplectic) Higgs bundles of 
the same parabolic slope is again polystable. Also, for two
parabolic vector bundles $V_\ast$ and $W_\ast$ with $V$ and
$W$ as the respective underlying vector bundles, we have
$$
(\theta_V\oplus \theta^L_V)\oplus (\theta_W\oplus \theta^L_W) 
\,=\, (\theta_{V\oplus W}\oplus \theta^L_{V\oplus W})\, ~
\text{ and }~\, \varphi_{V_\ast\oplus W_\ast}\,=\,
\varphi_{V_\ast}\, \oplus \varphi_{W_\ast}\, .
$$

\begin{proposition} \label{polystable}
Let $(E_\ast\, , \varphi\, , \theta)$ be a polystable orthogonal or
symplectic parabolic Higgs bundle. Then the parabolic Higgs
bundle $(E_\ast\, , \theta)$ is polystable.
\end{proposition}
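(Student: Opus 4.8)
The plan is to exploit the direct-sum decomposition from the definition of polystability and treat the two kinds of summands separately. I would first record the elementary fact that every orthogonal or symplectic parabolic bundle with values in $L_\ast$ has parabolic slope $\pardeg(L_\ast)/2$: for $E_\ast$ this follows from the isomorphism $\widetilde\varphi$ of \eqref{isomorphism} by comparing parabolic degrees, using $\pardeg(E_\ast^\ast)=-\pardeg(E_\ast)$, and it applies in particular to each summand $E^i_\ast$ and to $V_\ast\oplus(L_\ast\tensor V_\ast^\ast)$. Thus all summands share the parabolic slope $\parmu(E_\ast)=\pardeg(L_\ast)/2$, and since a direct sum of polystable parabolic Higgs bundles of one common parabolic slope is again polystable (it is a direct sum of stable ones of that slope, and such direct sums are semistable), it suffices to show that the parabolic Higgs bundle underlying each summand $(E^i_\ast,\varphi^i,\theta^i)$ is polystable.

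For a summand $(V_\ast\oplus(L_\ast\tensor V_\ast^\ast)\, ,\varphi_{V_\ast}\, ,\theta_V\oplus\theta^L_V)$ with $(V_\ast,\theta_V)$ a polystable parabolic Higgs bundle, the underlying parabolic Higgs bundle is $(V_\ast,\theta_V)\oplus(L_\ast\tensor V_\ast^\ast,\theta^L_V)$. The first summand is polystable by hypothesis; the second is polystable because, by Lemma \ref{higgs}, the parabolic dual of a polystable parabolic Higgs bundle is polystable and tensoring with the parabolic line bundle $L_\ast$ (with zero Higgs field) preserves polystability. Since $V_\ast$ and $L_\ast\tensor V_\ast^\ast$ are isotropic for $\varphi_{V_\ast}$ and are preserved by $\theta_V\oplus\theta^L_V$, semistability of $(E_\ast,\varphi,\theta)$ gives $\parmu(V_\ast)\leq\parmu(E_\ast)$ and $\parmu(L_\ast\tensor V_\ast^\ast)\leq\parmu(E_\ast)$, and as the two slopes sum to $\pardeg(L_\ast)=2\parmu(E_\ast)$ they both equal $\parmu(E_\ast)$; hence the underlying bundle is a direct sum of two polystable parabolic Higgs bundles of the same slope, so it is polystable.

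The substantial case is a summand $(E^i_\ast,\varphi^i,\theta^i)$ that is stable as an orthogonal (resp.\ symplectic) parabolic Higgs bundle, which I would handle by a parabolic Higgs version of the Ramanan--Ramanathan argument, by induction on $\rank(E^i)$. Compatibility of $\theta^i$ with $\varphi^i$ makes $\widetilde{\varphi^i}$ an isomorphism of parabolic Higgs bundles $E^i_\ast\to L_\ast\tensor(E^i_\ast)^\ast$, so it sends the canonical maximal destabilizing subbundle $D$ of $(E^i_\ast,\theta^i)$ to that of $L_\ast\tensor(E^i_\ast)^\ast$; the latter is $L_\ast\tensor(E^i_\ast/E^{k-1}_\ast)^\ast$, where $0\subsetneq E^1_\ast\subsetneq\cdots\subsetneq E^k_\ast=E^i_\ast$ is the Harder--Narasimhan filtration, so $D=(E^{k-1})^\perp$ with respect to $\widehat{\varphi^i}$. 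If $(E^i_\ast,\theta^i)$ were not semistable then $k\geq2$, hence $D=E^1\subseteq E^{k-1}$ and $\widehat{\varphi^i}(D\tensor D)\subseteq\widehat{\varphi^i}(D\tensor E^{k-1})=0$, so $D$ would be an isotropic $\theta^i$-invariant subbundle with $\parmu(D_\ast)>\parmu(E^i_\ast)$, contradicting stability. Hence $(E^i_\ast,\theta^i)$ is semistable; if it is stable we are done, and otherwise there is a nonzero proper $\theta^i$-invariant subbundle $F\subset E^i$ with $\parmu(F_\ast)=\parmu(E^i_\ast)$, which by Definition \ref{ds} cannot be isotropic. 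Its orthogonal complement $F^\perp$ with respect to $\widehat{\varphi^i}$ is $\theta^i$-invariant, of rank $\rank(E^i)-\rank(F)$, and a short computation using $\parmu(F_\ast)=\pardeg(L_\ast)/2$ gives $\parmu(F^\perp_\ast)=\parmu(E^i_\ast)$. Now $F\cap F^\perp$ is an isotropic $\theta^i$-invariant proper subbundle of $F$, so stability makes its parabolic slope strictly smaller than $\parmu(E^i_\ast)$ unless it is zero; comparing the ranks and parabolic degrees of $F$, $F^\perp$, $F\cap F^\perp$ and $F+F^\perp$ against semistability of $(E^i_\ast,\theta^i)$ forces $F\cap F^\perp=0$, so $E^i=F\oplus F^\perp$ as parabolic Higgs bundles, orthogonally and with $\widehat{\varphi^i}$ nondegenerate on each factor. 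Thus $(E^i_\ast,\varphi^i,\theta^i)$ decomposes into two stable orthogonal (resp.\ symplectic) parabolic Higgs bundles of smaller rank (any isotropic $\theta^i$-invariant subbundle of a factor is one of $E^i$), and by induction their underlying parabolic Higgs bundles are polystable of slope $\parmu(E^i_\ast)$; hence so is $(E^i_\ast,\theta^i)$.

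The main obstacle is this last case. Obtaining semistability of the underlying parabolic Higgs bundle hinges on correctly tracking the Harder--Narasimhan filtration through the duality isomorphism $\widetilde{\varphi^i}$, and the splitting step requires careful parabolic bookkeeping --- the parabolic degrees of the saturations of $F\cap F^\perp$ and $F+F^\perp$, and the identification of $F\oplus F^\perp$ with $E^i$ as parabolic bundles --- for which I would invoke the standard properties of parabolic degree and saturation from \cite{Bi97}, \cite{MY92}. The remaining ingredients (behaviour of parabolic degree under dualizing and tensoring with a line bundle, stability being inherited by orthogonal summands, and direct sums of semistables of equal slope being semistable) are routine.
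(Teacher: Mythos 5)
Your proposal is correct in outline, but it takes a genuinely different route from the paper's proof. The paper never invokes the direct-sum decomposition from the definition of polystability at the outset; it argues globally on $(E_\ast\, ,\varphi\, ,\theta)$ in two stages. For semistability it uses exactly the device you apply to the stable summands: the isomorphism $\widetilde\varphi$ carries the Harder--Narasimhan filtration of $(E_\ast\, ,\theta)$ to that of $(L_\ast\tensor E_\ast^\ast\, ,\theta')$, which is the dual filtration, so the maximal destabilizing subbundle is isotropic and $\theta$-invariant, contradicting semistability of the triple. For the polystability step, instead of your Ramanan--Ramanathan perpendicular-splitting induction on rank, the paper introduces the socle filtration, shows by the same duality argument that the socle $V_\ast$ is isotropic of slope $\parmu(E_\ast)$, uses the definition of polystability to split off the hyperbolic summand $V_\ast\oplus(L_\ast\tensor V_\ast^\ast)$ as in \eqref{dec}, and derives a contradiction from the maximality of the socle (the polystable piece $L_\ast\tensor V_\ast^\ast$ of the same slope would have to lie inside $V_\ast$). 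Your route buys a self-contained proof of the sharper local statement that a \emph{stable} orthogonal or symplectic parabolic Higgs bundle has polystable underlying parabolic Higgs bundle; the cost is precisely the bookkeeping you flag, which is where the real work sits: one must check that $F^\perp$, computed as the kernel of $E^i_\ast\to L_\ast\tensor F_\ast^\ast$, carries the induced parabolic structure needed for $\parmu(F^\perp_\ast)=\parmu(E^i_\ast)$, run the degree comparison for the saturations of $F\cap F^\perp$ and $F+F^\perp$, and verify that $\varphi^i$ restricts to forms on $F$ and $F^\perp$ for which the analogue of $\widetilde\varphi$ is an isomorphism of parabolic bundles before the induction can close. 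The paper's route avoids the rank induction entirely at the price of the socle machinery. One small correction: Lemma \ref{higgs} only constructs the induced Higgs fields; the preservation of (semi/poly)stability under parabolic dualizing and tensoring by $L_\ast$ is the content of the slope identities the paper records inside this very proof, and that is what you should cite for the hyperbolic summands.
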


\begin{proof}
Let $E$ be the vector bundle underlying $E_\ast$.
We will first show that $(E_\ast\, , \theta)$ is semistable.

Assume that $(E_\ast\, , \theta)$ is not semistable. Let
$$
F_\ast\, \subset\, E_\ast
$$
be the unique parabolic
subbundle of $E_\ast$ of positive rank such that
\begin{itemize}
\item $\theta(F)\, \subset\, F\otimes\Omega_X(S)$, where
$F\, \subset\, E$ is the subbundle underlying $F_\ast$,

\item $\parmu(F_\ast)\, \geq\, \parmu(V_\ast)$ for all
parabolic subbundles
$V_\ast\, \subset\, E_\ast$ with $\theta(V)\, \subset\,
V\otimes\Omega_X(S)$, where $V$ is the vector bundle underlying
$V_\ast$, and

\item $\text{rank}(F_\ast)$ is maximal among all parabolic
subbundles of 
$E_\ast$ satisfying the first two conditions.
\end{itemize}

The quotient bundle $E/F$ is equipped with a parabolic
structure given by the parabolic structure of $E_\ast$, and
this parabolic vector bundle is equipped
with a Higgs field given by $\theta$. If the parabolic Higgs
bundle $E/F$ equipped with these induced structures is not semistable,
we may consider the subbundle of it constructed as above using 
the three conditions. Proceeding inductively, we get a filtration 
of parabolic subbundles
\begin{equation}\label{hn}
0 \,=\, F^0_\ast\, \subset\, F_\ast \, =\, F^1_\ast
\, \subset\, F^2_\ast \, \subset\,\cdots \, \subset\,
F^{m-1}_\ast \, \subset\, F^m_\ast\,=\, E_\ast
\end{equation}
such that for all $i\, \in \, [1\, ,m]$,
\begin{itemize}
\item $\theta(F_i) \, \subset\, F_i\otimes\Omega_X(S)$,

\item $\parmu(F^i_\ast/F^{i-1}_\ast)\, \geq\,
\parmu(F'_\ast)$ for every parabolic subbundle
$F'_\ast\, \subset\, E_\ast/F^{i-1}_\ast$ preserved by the
Higgs field on $E_\ast/F^{i-1}_\ast$ induced by $\theta$, and

\item $F^i_\ast/F^{i-1}_\ast$ is of maximal rank among all
parabolic subbundles of $E_\ast/F^{i-1}_\ast$ satisfying
the first two conditions. 
\end{itemize}
The filtration in \eqref{hn} is called the \textit{Harder--Narasimhan\/}
filtration for $(E_\ast\, , \theta)$.

The Higgs field $\theta$ induces a Higgs field $\theta'$
on $L_\ast \tensor E_\ast^\ast$ using the zero Higgs field
on $L_\ast$ (see Lemma \ref{higgs}).
We note that $(L_\ast \tensor E_\ast^\ast
\, , \theta')$ is not semistable because
$(E_\ast\, , \theta)$, which is isomorphic to
it (see Definition \ref{compatible-1}), is not semistable. Let
\begin{equation}\label{hn2}
0 \,=\, G^0_\ast\, \subset\, G^1_\ast
\, \subset\, G^2_\ast \, \subset\,\cdots \, \subset\,
G^{m-1}_\ast \, \subset\, G^m_\ast\,=\, L_\ast \tensor E_\ast^\ast
\end{equation}
be the Harder--Narasimhan filtration for
$(L_\ast \tensor E_\ast^\ast \, , \theta')$. From the uniqueness of
the Harder--Narasimhan filtration we conclude that
\begin{equation}\label{uhf}
\widetilde{\varphi} (F^i_\ast) \, =\, G^i_\ast
\end{equation}
for all $i$, where $\widetilde{\varphi}$ is the isomorphism in
\eqref{isomorphism}.

We put down some properties of the parabolic slope
which are straightforward to derive.
\begin{itemize}
\item $\parmu(W_\ast \otimes W'_\ast) \, =\,
\parmu(W_\ast) + \parmu(W'_\ast)$ for any parabolic
vector bundles $W_\ast$ and $W'_\ast$; also,
$\parmu(W^\ast_\ast)\, =\, - \parmu(W_\ast)$.

\item A parabolic Higgs bundle $(W_\ast\, ,\beta)$ is
semistable (respectively, polystable)
if and only if $W^\ast_\ast$ equipped with the Higgs field
induced by $\beta$ is semistable (respectively, polystable).
(This follows from the first property.)

\item Let $(M_\ast\, ,\gamma)$ be a parabolic Higgs
line bundle. A parabolic Higgs vector bundle $(W_\ast\, ,\beta)$
is semistable (respectively, polystable) if and only if
$W_\ast\otimes M_\ast$ equipped
with the Higgs field induced by $\beta$ and $\gamma$
(see Lemma \ref{higgs})
is semistable (respectively, polystable). (This also
follows from the first property.)
\end{itemize}

{}From the above properties it follows immediately
that the filtration in \eqref{hn2} is given by the dual
of the filtration in \eqref{hn}. This means that
\begin{equation}\label{is1}
(L_\ast \tensor E_\ast^\ast)/G^{m-i}_\ast \,=\, 
L_\ast \tensor (F^i_\ast)^\ast 
\end{equation}
for all $i\, \in\, [1\, ,m]$.

Combining \eqref{is1} and \eqref{uhf} it follows
that $F^1_\ast$ is an isotropic subbundle
of $E_\ast$ for the pairing $\varphi$, because the
composition
$$
F^1_\ast \, \stackrel{\widetilde{\varphi}}{\longrightarrow}\,
G^1_\ast\, \longrightarrow\, (L_\ast \tensor E_\ast^\ast)/
G^{m-1}_\ast
$$
vanishes identically (recall that $m\, \geq\, 2$).
Since $F_\ast\,=\, F^1_\ast$ (see \eqref{hn}) is an isotropic 
subbundle for the pairing $\varphi$,
the subbundle $F_\ast$ violates the semistability
condition for $(E_\ast\, , \varphi\, , \theta)$. But we
know that $(E_\ast\, , \varphi\, , \theta)$ is semistable
because it is polystable. In view of this contradiction,
we conclude that $(E_\ast\, , \theta)$ is semistable.

Assume that $(E_\ast\, , \theta)$ is not polystable.

Consider all parabolic subbundles
$$
V_\ast\, \subset\, E_\ast
$$
such that
\begin{itemize}
\item $\parmu(V_\ast)\, =\, \parmu(E_\ast)$, 

\item $\theta(V)\, \subset\, V\otimes\Omega_X(S)$, where
$V\, \subset\, E$ is the subbundle underlying $V_\ast$, and

\item the parabolic Higgs bundle defined by $V_\ast$ equipped
with the Higgs field induced by $\theta$ is polystable.
\end{itemize}

Let $\widehat{V}_\ast$ be the parabolic subbundle of
$E_\ast$ generated by all such parabolic subbundles. From
the construction of $\widehat{V}_\ast$ it follows immediately
that
$$
\theta(\widehat{V})\, \subset\,
\widehat{V}\otimes\Omega_X(S)\, ,
$$
where $\widehat{V}\, \subset\, E$ is the subbundle underlying
$\widehat{V}_\ast$. Let $\theta_{\widehat{V}}$ be the
Higgs field on $\widehat{V}_\ast$ defined by~$\theta$.
We have
\begin{itemize}
\item $\parmu(\widehat{V}_\ast)\, =\, \parmu(E_\ast)$,

\item the parabolic Higgs bundle $(\widehat{V}_\ast\, ,
\theta_{\widehat{V}})$ is polystable, and

\item $\widehat{V}_\ast$ is of maximal rank among all
parabolic subbundles of $E_\ast$ satisfying the first
two conditions.
\end{itemize}
These follow from the fact that for any two parabolic
subbundles $W^1_\ast$ and $W^2_\ast$ of $E_\ast$ preserved
by $\theta$ and satisfying the two conditions
\begin{itemize}
\item $\parmu(W^j_\ast)\, =\, \parmu(E_\ast)$, $j\, =\,1\, ,2$,
and

\item $W^j_\ast$ equipped with the parabolic Higgs field induced
by $\theta$ is polystable,
\end{itemize}
the parabolic subbundle of $E_\ast$ generated by $W^1_\ast$ and 
$W^2_\ast$ is preserved by $\theta$ and also satisfies
the above two conditions. (See \cite[p.\ 23, Lemma 1.5.5]{HL}.)

Since $(E_\ast\, , \theta)$ is assumed to be not polystable,
we have $\text{rank}(\widehat{V}_\ast)\, < \,
\text{rank}(E_\ast)$.

This polystable parabolic Higgs bundle $(\widehat{V}_\ast\, ,
\theta_{\widehat{V}})$ is called the \textit{socle\/} of
$(E_\ast\, , \theta)$. Just as in the case of the Harder--Narasimhan
filtration, we get a filtration of parabolic
subbundles
\begin{equation}\label{soc}
0 \,=\, V^0_\ast\, \subset\, V_\ast \, =\, V^1_\ast
\, \subset\, V^2_\ast \, \subset\,\cdots \, \subset\,
V^{n-1}_\ast \, \subset\, V^n_\ast\,=\, E_\ast
\end{equation}
such that for all $i\, \in \, [1\, ,n]$,
\begin{itemize}
\item $\theta(V_i) \, \subset\, V_i\otimes\Omega_X(S)$, and

\item $V^i_\ast/V^{i-1}_\ast$ equipped with the
Higgs structure induced by $\theta$
is the socle of $E_\ast/V^{i-1}_\ast$ equipped with the
Higgs structure induced by $\theta$.
\end{itemize}
The filtration in \eqref{soc} is called the \textit{socle
filtration\/} for $(E_\ast\, , \theta)$.

We note that $(L_\ast \tensor E_\ast^\ast
\, , \theta')$ is semistable because it is isomorphic
to the semistable parabolic Higgs bundle
$(E_\ast\, , \theta)$. Let
\begin{equation}\label{soc2}
0 \,=\, W^0_\ast\, \subset\, W^1_\ast
\, \subset\, W^2_\ast \, \subset\,\cdots \, \subset\,
W^{n-1}_\ast \, \subset\, W^n_\ast\,=\, L_\ast \tensor E_\ast^\ast
\end{equation}
be the socle filtration for
$(L_\ast \tensor E_\ast^\ast \, , \theta')$. From the uniqueness of
the socle filtration it follows that
\begin{equation}\label{isb}
\widetilde{\varphi} (V^i_\ast) \, =\, W^i_\ast
\end{equation}
for all $i$, where $\widetilde{\varphi}$ is the isomorphism in
\eqref{isomorphism}.

{}From the properties of the parabolic slope listed above it follows
that
$$
(L_\ast \tensor E_\ast^\ast)/W^{n-i}_\ast \,=\,
L_\ast \tensor (V^i_\ast)^\ast
$$
for all $i\, \in\, [1\, ,n]$. Just as before, this and
\eqref{isb} together imply that
$V^1_\ast\, \subset\, E_\ast$ is an isotropic subbundle for
$\varphi$.

Consider $V^1_\ast\, =\, V_\ast$ in \eqref{soc}.
Let $\theta_{V}$ be the Higgs field on $V_\ast$
induced by $\theta$.
Since $V_\ast\, \subset\, E_\ast$ is an isotropic subbundle for
$\varphi$ with $\parmu(V_\ast)
\, =\, \parmu(E_\ast)$ and preserved by $\theta$, from the
definition of polystability
of $(E_\ast\, , \varphi\, , \theta)$ it follows that
there is an orthogonal or symplectic parabolic
Higgs bundle $(W_\ast\, ,\phi\, ,\alpha)$ (depending on whether
$(E_\ast\, , \varphi)$ is orthogonal or symplectic) such that
\begin{equation}\label{dec}
(E_\ast\, ,\varphi\, ,\theta)\, =\,
(V_\ast\oplus (L_\ast \otimes V^\ast_\ast)\, ,\varphi_{V_\ast}
\, , \theta_V\oplus \theta^L_V)\oplus
(W_\ast\, ,\phi\, ,\alpha)\, ,
\end{equation}
where $(V_\ast\oplus (L_\ast \otimes V^\ast_\ast)\, ,\varphi_{V_\ast}
\, , \theta_V\oplus \theta^L_V)$ is defined in \eqref{dpH}.

We have shown that the parabolic Higgs bundle
$(E_\ast\, ,\theta)$ is semistable.
Therefore, from \eqref{dec} it follows that both the parabolic
Higgs bundles $(V_\ast\oplus (L_\ast \otimes V^\ast_\ast)
\, , \theta_V\oplus \theta^L_V)$ and $(W_\ast\, ,\alpha)$
are semistable.

We note that the parabolic Higgs bundle
$(L_\ast \otimes V^\ast_\ast \, ,  \theta^L_V)$ is polystable,
because $(V_\ast\, ,\theta_V)$ is polystable. Also, we
have $\parmu(L_\ast \otimes V^\ast_\ast) \,=\,
\parmu(E_\ast)$, because $\parmu(V_\ast) \,=\, \parmu(E_\ast)$.
Since $(V_\ast\, ,\theta_V)$ is the socle of
$(E_\ast\, ,\theta)$, these imply that the Higgs parabolic
subbundle
$$
(L_\ast \otimes V^\ast_\ast \, ,  \theta^L_V)\, \subset\,
(E_\ast\, ,\theta)
$$
is actually contained in $(V_\ast\, ,\theta_V)$. But this
contradicts \eqref{dec}. Therefore, we conclude that
$(E_\ast\, , \theta)$ is polystable.
\end{proof}

\section{Hermitian-Einstein connections}

Fix a Hermitian metric $\omega$ on $X \setminus S$ which extends 
smoothly over $X$; it is K\"ahler because $\dim_\CC X = 1$.

\begin{definition}
A {\em Hermitian-Einstein metric\/} on a Higgs vector bundle
$(E\, , \theta)$ over $X \setminus S$ is defined to be a Hermitian metric $h$ on $E$ such that its Chern curvature form $F_h$ satisfies the equation
\begin{equation} \label{hermitian-einstein}
\sqrt{-1}\cdot \Lambda_\omega (F_h + [\theta, \theta^\ast])\,=
\,\lambda \cdot \id_E
\end{equation}
for some $\lambda \,\in\, \RR$ which is known as
the {\em Einstein factor}; here, $\Lambda_\omega$ is
the adjoint of forming the wedge product with $\omega$ and
$\theta^\ast$ is the adjoint endomorphism of $\theta$ with respect to
$h$, and $[{\cdot}\, , {\cdot}]$ is defined using the exterior
product on forms and the Lie algebra structure of the fibers
of $\End(E)$.

If $h$ is a Hermitian-Einstein metric, then its Chern connection
is called a {\em Hermitian-Einstein connection}.
\end{definition}

Let $(E_\ast\, , \theta)$ be a parabolic Higgs bundle. In
\cite[Theorem 4]{Si90}, Simpson describes a construction of a
background metric on $E$ over $X \setminus S$ from the given data
$(E_\ast\, , \theta)$, which is compatible with taking parabolic duals.
Also, it is compatible with taking parabolic tensor products up
to mutual boundedness of the resulting background metrics. (See
\cite[Proposition 3.1, Corollary 4.3, Theorem 4]{Si90}.) The metric
on $E$ over $X \setminus S$ obtained from $(E_\ast\, , \theta)$ via
this construction will be denoted by $h_0(E_\ast\, , \theta)$.

The following existence result is known (see \cite[Lemma 6.3, Theorem 6]{Si90}, \cite[Theorem 1]{Si88}):

\begin{theorem} \label{simpson}
If $(E_\ast\, , \theta)$ is stable, then there is a Hermitian-Einstein metric $h$ on $(E\, , \theta)$ over $X \setminus S$ such that the metric $h$ and the background metric $h_0(E_\ast\, , \theta)$ are mutually bounded.
\end{theorem}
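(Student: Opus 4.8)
The plan is to deduce this from Simpson's analytic existence theorems rather than to reprove them, after a dictionary check. A parabolic Higgs bundle $(E_\ast\, , \theta)$ in the sense of Definition \ref{d-oH} that is stable --- every subbundle $F \subset E$ with $\theta(F) \subset F \otimes \Omega_X(S)$ satisfies $\parmu(F_\ast) < \parmu(E_\ast)$ --- is precisely a stable filtered regular Higgs bundle on the punctured curve $X \setminus S$ in Simpson's terminology: the condition $\Res(\theta, x_i)(F_{i,j}) \subset F_{i,j}$ in \eqref{filt} is exactly compatibility of $\theta$ with the filtration, and $\parmu$ coincides with Simpson's filtered slope. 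Hence the assertion is the case of a noncompact curve with parabolic structure of \cite[Theorem 6]{Si90}, whose compact, higher-dimensional prototype is \cite[Theorem 1]{Si88}.

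The role of the background metric is the second point to recall. Simpson's construction produces an initial metric $h_0(E_\ast\, , \theta)$ adapted to the parabolic data near $S$: in a local holomorphic frame compatible with \eqref{filt}, $h_0$ is mutually bounded with $\mathrm{diag}(|z|^{2\alpha_{i,j}})$, and the logarithmic nature of $\theta$ makes the pseudo-curvature $\Lambda_\omega(F_{h_0} + [\theta, \theta^\ast_{h_0}])$ controlled near each $x_i$ (Simpson's acceptability condition). One then runs Simpson's heat flow (equivalently, a continuity method) starting from $h_0$, searching for a solution of \eqref{hermitian-einstein} among Hermitian metrics $h$ mutually bounded with $h_0$; this is the natural function space on which the associated Donaldson-type functional is defined and on which the $C^0$-estimates close up.

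The third step is the a priori estimate, and it is where the stability hypothesis is used and where the real difficulty lies. One must show that along the flow the metrics do not drift away from $h_0$ --- equivalently, that mutual boundedness with $h_0$ is preserved --- so that a limit exists. Failure of this would, after a weak-subbundle extraction in the style of Uhlenbeck--Yau carried out on $X \setminus S$ together with control of the parabolic weights at $S$, produce a $\theta$-invariant coherent parabolic subsheaf of $E_\ast$ with $\parmu \geq \parmu(E_\ast)$; saturating it gives a genuine $\theta$-invariant parabolic subbundle contradicting stability. The delicate point is local to $S$: one needs the limiting subsheaf to carry exactly the induced parabolic weights, with no jumping of the filtered structure across the punctures, and this is guaranteed precisely by the acceptability of $h_0$ and the tameness (regularity) of $\theta$. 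Granting these estimates --- which are the content of \cite[Lemma 6.3]{Si90} --- the limit metric $h$ solves \eqref{hermitian-einstein} on $X \setminus S$ and is, by construction, mutually bounded with $h_0(E_\ast\, , \theta)$, as asserted.
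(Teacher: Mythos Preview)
Your proposal is correct and takes essentially the same approach as the paper: both defer to Simpson's results \cite[Lemma 6.3, Theorem 6]{Si90} and \cite[Theorem 1]{Si88}, with the paper simply citing them without further comment while you additionally spell out the dictionary between parabolic Higgs bundles and Simpson's filtered regular Higgs bundles and sketch the analytic strategy behind his proof.
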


We will prove the uniqueness of the associated Hermitian-Einstein
connection in Theorem \ref{simpson}. For this, we need the
following lemma:

\begin{lemma} \label{parallel} Let $(F\, , \widetilde \theta)$ be 
a Higgs vector bundle on $X \setminus S$ admitting a 
Hermitian-Einstein metric $h$ with Einstein factor $\lambda = 0$.
Let $\sigma$ be a holomorphic section of $F$ satisfying
the conditions that $\widetilde 
\theta(\sigma) \,=\, 0$ and $\sigma$ is bounded with respect to $h$. 
Then $\sigma$ is parallel with respect to the Chern connection $D$ of $h$.
\end{lemma}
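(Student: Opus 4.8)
The plan is to use a standard Bochner/Weitzenböck-type argument adapted to the Higgs setting. Write $D = D' + D''$ for the decomposition of the Chern connection into its $(1,0)$ and $(0,1)$ parts, so that $D'' = \dbar$ is the holomorphic structure on $F$. Since $\sigma$ is holomorphic, $D''\sigma = 0$. Introduce the ``total'' operator $\D := D + \theta + \theta^\ast$ coming from the Higgs structure; the Hermitian-Einstein condition with $\lambda = 0$ says precisely that the curvature $\D^2$ (i.e.\ $F_h + [\theta,\theta^\ast]$, the only surviving component after contracting with $\omega$ in complex dimension one) is $\Lambda_\omega$-traceless in the sense that $\sqrt{-1}\,\Lambda_\omega(F_h + [\theta,\theta^\ast]) = 0$. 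Set $s := D'\sigma + \theta^\ast \sigma$, which is the ``holomorphic-direction'' part of $\D\sigma$ once one uses $D''\sigma = 0$ and $\theta(\sigma) = 0$. I would compute $\dbar$ of the real function $|\sigma|_h^2$ and then $\partial\dbar |\sigma|_h^2$, using that $\theta^\ast$ is the $h$-adjoint of $\theta$ and that $\widetilde\theta(\sigma)=0$, to obtain a pointwise identity of the shape
\[
  \sqrt{-1}\,\partial\dbar |\sigma|_h^2 \;=\; |s|_h^2 \,\omega \;-\; \big\langle \sqrt{-1}\,\Lambda_\omega(F_h + [\theta,\theta^\ast])(\sigma),\, \sigma\big\rangle\,\omega\,,
\]
where the second term vanishes by the Hermitian-Einstein equation. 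Hence $\sqrt{-1}\,\Lambda_\omega\,\partial\dbar |\sigma|_h^2 \geq 0$, i.e.\ $|\sigma|_h^2$ is a bounded subharmonic function on $X \setminus S$.

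The next step is to promote this to the conclusion that $|\sigma|_h^2$ is constant. This is where the boundedness hypothesis and the fact that $X\setminus S$ is a punctured compact Riemann surface enter: a bounded subharmonic function on $X$ minus finitely many points extends across the punctures (the punctures have zero capacity, so they are removable singularities for a bounded subharmonic function) to a subharmonic function on the compact Kähler manifold $X$, which must therefore be constant by the maximum principle. Once $|\sigma|_h^2$ is constant, the displayed identity forces $|s|_h^2 \equiv 0$, hence $s = D'\sigma + \theta^\ast\sigma = 0$.

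Finally I would deduce $D\sigma = 0$ from $D''\sigma = 0$, $\theta(\sigma)=0$, and $D'\sigma + \theta^\ast\sigma = 0$. The cleanest way is to observe that $\langle D'\sigma + \theta^\ast\sigma,\, D'\sigma + \theta^\ast\sigma\rangle = 0$ expands, using the adjointness of $\theta$ and $\theta^\ast$ and $\theta\sigma = 0$, into $|D'\sigma|_h^2 + |\theta^\ast\sigma|_h^2$ (the cross terms pair $\theta^\ast\sigma$ against $D'\sigma$ in a way that, after integrating the earlier identity or by a direct type-count, drops out), so that $D'\sigma = 0$ and $\theta^\ast\sigma = 0$ separately. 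Combining $D'\sigma = 0$ with $D''\sigma = 0$ gives $D\sigma = 0$, i.e.\ $\sigma$ is parallel for the Chern connection, as claimed.

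The main obstacle is the regularity/extension step: justifying that the bounded subharmonic function $|\sigma|_h^2$, a priori defined only on $X\setminus S$, is genuinely subharmonic across $S$ so that the maximum principle on compact $X$ applies. One must be careful that no negative mass is concentrated at the punctures; this follows because isolated points are polar sets in real dimension two, so a subharmonic function bounded near such a point extends subharmonically across it. An alternative that avoids extension altogether is to multiply by a logarithmic cutoff supported near $S$ and integrate $\sqrt{-1}\,\Lambda_\omega\partial\dbar|\sigma|_h^2 \geq 0$ against it, letting the cutoff degenerate; the boundary contributions vanish because $\int |d\log|_\omega^2$-type quantities near a puncture in dimension one tend to zero, yielding $\int_{X\setminus S} |s|_h^2\,\omega_{\mathrm{vol}} = 0$ directly. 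Either route closes the argument.
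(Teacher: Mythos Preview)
Your argument is correct and follows essentially the same Bochner/Weitzenb\"ock strategy as the paper: compute the Laplacian of $|\sigma|_h^2$, use holomorphicity, the Hermitian--Einstein equation with $\lambda=0$, and $\widetilde\theta(\sigma)=0$ to get a sign, and conclude $D'\sigma=0$.

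Two minor differences are worth noting. First, the paper does not package $D'\sigma$ and $\widetilde\theta^\ast\sigma$ into a single $s$; it writes directly
\[
\square|\sigma|_h^2 \;=\; h\big((\sqrt{-1}\Lambda_\omega F_h)\sigma,\sigma\big) - |D'\sigma|_h^2,
\qquad
h\big((\sqrt{-1}\Lambda_\omega F_h)\sigma,\sigma\big) \;=\; -|\widetilde\theta^\ast\sigma|_h^2,
\]
which makes the separation of $D'\sigma=0$ and $\widetilde\theta^\ast\sigma=0$ automatic and sidesteps any discussion of cross terms (your type argument for the cross terms is fine, but unnecessary). Second, and more substantively, the paper handles the noncompactness not by a removable-singularity/extension argument but by invoking \cite[Proposition~2.4]{Si88}: on $X\setminus S$, any bounded nonnegative smooth $f$ with $\square f\le 0$ already satisfies $\square f=0$. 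This is a ready-made black box for precisely this situation and avoids checking that the (a priori singular) extension across $S$ is genuinely subharmonic; your cutoff alternative also works and is morally the same integration-by-parts that underlies Simpson's proposition. Either route yields $D'\sigma=0$, hence $D\sigma=0$.
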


\begin{proof}
Let $\square \,:=\, \sqrt{-1} \cdot \Lambda_\omega \dbar \partial$ be
the (complex) Laplacian on functions with respect to~$\omega$. By
\cite[Proposition 2.4]{Si88}, we know that the manifold $X \setminus S$ satisfies the following condition:
\begin{equation} \label{laplace-condition}
\begin{minipage}{14cm} \centering
If $f$ is a bounded non-negative smooth function on
$X \setminus S$, \\ then $\square f \,\leq\, 0$ implies
$\square f \,= \,0$.
\end{minipage}
\end{equation}
We want to apply \eqref{laplace-condition} to the function $f \,:=\,
|\sigma|_h^2$. Since $\sigma$ is holomorphic, we have
\[
  \square |\sigma|_h^2 \,=\,
h\big((\sqrt{-1} \cdot \Lambda_\omega F_h)(\sigma), \sigma\big) -
|D' \sigma|_h^2\, ,
\]
where $D'$ is the $(1,0)$ component of the Chern
connection $D$. As the Einstein factor of $h$ is $0$,
the Hermitian-Einstein equation \eqref{hermitian-einstein} implies that
\[
  \sqrt{-1}\cdot\Lambda_\omega F_h \,= \,
- \sqrt{-1}\cdot\Lambda_\omega [\widetilde{\theta}\, ,
\widetilde{\theta}^\ast]\, .
\]
Combining this with $\widetilde \theta(\sigma) \,=\, 0$ it
follows that
\[
  h\big((\sqrt{-1} \cdot \Lambda_\omega F_h)(\sigma), \sigma\big)\,
 =\, - \sqrt{-1} \cdot \Lambda_\omega h\big([\widetilde \theta\, ,
\widetilde \theta^\ast](\sigma), \sigma\big)
\,=\, - |\widetilde \theta^\ast(\sigma)|_h^2 \leq 0\, .
\]
Consequently, we have
\[
  \square |\sigma|_h^2\,\leq\, - |D' \sigma|_h^2 \,\leq\, 0\, .
\]
Since $|\sigma|_h^2$ is bounded, condition \eqref{laplace-condition}
yields $\square |\sigma|_h^2 \,= \,0$, and thus $D' \sigma \,=\, 0$.
As $\sigma$ is holomorphic, this already implies that
$D \sigma \,=\, 0$.
\end{proof}

\begin{proposition} \label{uniqueness}
Let $h_1$ and $h_2$ be two Hermitian-Einstein metrics on $(E\, ,
\theta)$ over $X \setminus\nolinebreak S$ which are mutually bounded. Then the corresponding Chern connections agree.
In particular, the Hermitian-Einstein connection on $(E\, , \theta)$ over $X \setminus S$ given by Theorem~\ref{simpson} is unique.
\end{proposition}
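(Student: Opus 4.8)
The plan is to reduce the uniqueness statement to Lemma~\ref{parallel} by a standard endomorphism argument. Given two mutually bounded Hermitian-Einstein metrics $h_1$ and $h_2$ on $(E\, ,\theta)$ over $X\setminus S$, there is a unique $h_1$-self-adjoint, positive-definite automorphism $g$ of $E$ with $h_2(s\, ,t)\,=\,h_1(g(s)\, ,t)$. Since $h_1$ and $h_2$ are mutually bounded, both $g$ and $g^{-1}$ are bounded with respect to the natural metric on $\End(E)$ induced by $h_1$ (equivalently by $h_2$). The key point is to exhibit $g$, viewed as a holomorphic-type section of the Higgs bundle $\End(E)\,=\,E\otimes E^\ast$, as a section which is flat for the induced Higgs field and then invoke Lemma~\ref{parallel}.

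More precisely, first I would endow $\End(E)$ with the induced Higgs field $\theta_{\End}\,=\,[\theta\, ,\,\cdot\,]$ (this is the Higgs field on $E\otimes E^\ast$ coming from $\theta$ on $E$ and the dual Higgs field on $E^\ast$, as in Lemma~\ref{higgs}, specialized to the curve case without parabolic structure). The difference of the two Chern curvatures, together with the Hermitian-Einstein equations for $h_1$ and $h_2$, gives an elliptic equation for $g$. The classical computation (cf. Simpson, and Kobayashi's book for the non-Higgs case) shows that $\square \operatorname{tr}(g)$ is controlled from above by a non-positive expression involving $|D''_{1}g\cdot g^{-1/2}|^2$ and $|[\theta\, ,\,\cdot\,]$-type terms, using that both metrics satisfy \eqref{hermitian-einstein} with the \emph{same} Einstein factor $\lambda$ — which holds because $\lambda$ is determined by $\pardeg$ and $\rank$, hence is the same for $h_1$ and $h_2$. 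Applying condition \eqref{laplace-condition} to the bounded non-negative function $\operatorname{tr}(g)+\operatorname{tr}(g^{-1})$ (which is bounded precisely by mutual boundedness) forces all the non-positive terms to vanish; in particular $D''_{1}g\,=\,0$, so $g$ is a holomorphic section of $\End(E)$, and the vanishing of the bracket terms gives $[\theta\, ,g]\,=\,0$, i.e. $\theta_{\End}(g)\,=\,0$.

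At this stage $(\End(E)\, ,\theta_{\End})$ carries the Hermitian-Einstein metric induced by $h_1$, whose Einstein factor is $0$ (the induced metric on an endomorphism bundle always has vanishing Einstein factor, since traces cancel). The section $g$ is holomorphic, bounded, and annihilated by $\theta_{\End}$, so Lemma~\ref{parallel} applies and shows that $g$ is parallel for the Chern connection of the induced metric on $\End(E)$; equivalently $D_1 g\,=\,0$, where $D_1$ is the Chern connection of $h_1$. From $D_1 g\,=\,0$ and positivity of $g$ one gets that $g^{1/2}$ is also $D_1$-parallel, and then a direct check shows that the Chern connections of $h_1$ and $h_2$ coincide: writing $D_2$ in terms of $D_1$ and $g$, all correction terms involve $D_1 g$ and hence vanish. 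The final sentence of the proposition is then immediate: Theorem~\ref{simpson} produces a Hermitian-Einstein metric mutually bounded with the background metric $h_0(E_\ast\, ,\theta)$, and any two such are mutually bounded with each other, hence give the same connection.

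The main obstacle I anticipate is bookkeeping rather than conceptual: one must set up the Bochner-type identity for $\square\operatorname{tr}(g)$ carefully enough to see that \emph{both} the curvature term and the Higgs bracket term contribute with the correct (non-positive) sign, so that \eqref{laplace-condition} can be invoked cleanly. Equality of the two Einstein factors, and the vanishing of the Einstein factor on $\End(E)$, are the two small points that make the argument go through and should be stated explicitly. The regularity/boundedness of $g$ away from $S$ is guaranteed by hypothesis, so no delicate analysis near $S$ is needed beyond what is already encoded in \eqref{laplace-condition}.
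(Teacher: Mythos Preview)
Your approach is correct but takes a longer detour than the paper's. The paper equips $\End(E)=E\otimes E^\ast$ with the \emph{mixed} metric $h$ induced by $h_1$ on $E$ and the dual of $h_2$ on $E^\ast$; this is still Hermitian--Einstein with factor $0$, and its Chern connection is $D_1\otimes\id+\id\otimes D_2^\ast$. The section chosen is $\sigma=\id_E$, which is tautologically holomorphic and satisfies $\widetilde\theta(\id_E)=[\theta,\id_E]=0$; mutual boundedness of $h_1,h_2$ is exactly the statement that $\id_E$ is $h$-bounded. Lemma~\ref{parallel} then gives $0=D(\id_E)=D_1-D_2$ in one stroke. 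By contrast, you work with the single metric induced by $h_1$ alone and the section $g$, which is not a priori holomorphic; you therefore need an additional Bochner computation on $\operatorname{tr}(g)+\operatorname{tr}(g^{-1})$ just to reach the hypotheses of Lemma~\ref{parallel}, essentially reproving its content once before invoking it. Your route buys nothing extra here, though it is closer to the classical Donaldson--Simpson uniqueness argument and would survive in settings where the mixed-metric trick is less convenient. One point you handle more explicitly than the paper: you note that the two Einstein factors agree (being determined by parabolic degree and rank), which the paper uses silently when asserting the induced Einstein factor on $\End(E)$ vanishes.
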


\begin{proof}
Let $F \,:=\, \End(E)\,= \, E \tensor E^\ast$ be the endomorphism
bundle of $E$ over $X \setminus S$ equipped with the Higgs
field $\widetilde \theta$ induced by $\theta$. Let $h$ be the
Hermitian metric on $F$ induced by $h_1$ and $h_2$. Then $h$
is a Hermitian-Einstein metric on $F$ with Einstein factor $\lambda = 0$. Its
Chern connection is
\[
  D \,=\, D_1 \tensor \id_{E^\ast} + \id_E \tensor D_2^\ast\, ,
\]
where $D_1$ and $D_2$ are the Chern connections associated to 
$h_1$ and $h_2$, respectively, and $D_2^\ast$ is the connection on 
$E^\ast$ induced by $D_2$.

We want to apply Lemma \ref{parallel} 
to the holomorphic section $\sigma \,:=\, \id_E$ of $\End(E)$. Since 
$\id_E$ commutes with $\theta$, we have $\widetilde \theta(\id_E) 
\,=\, 0$. The mutual boundedness of $h_1$ and $h_2$ implies that 
$\id_E$ is bounded with respect to $h$. Lemma \ref{parallel} then 
yields
\[
  0 \,=\, D(\id_E) \,= \, D_1 \circ \id_E - \id_E \circ D_2\, ,
\]
and thus $D_1 = D_2$.
\end{proof}

Now let $(E_\ast\, , \varphi\, , \theta)$ be an orthogonal or 
symplectic parabolic Higgs bundle on $X$. Let $L_\ast$ be the 
parabolic line bundle that we fixed earlier. Since $L_\ast$ is 
stable, it admits a Hermitian-Einstein metric $h_L$ by Theorem 
\ref{simpson} (the Higgs field on $L_\ast$ is always taken to be the
zero section). The corresponding Hermitian-Einstein
connection on $L_\ast$ is unique 
by Proposition \ref{uniqueness}; denote this connection by 
$\nabla_L$.

\begin{definition}
A {\em Hermitian-Einstein connection\/} on $(E_\ast\, , \varphi\ ,
\theta)$ is a Hermitian-Einstein connection $D$ on the underlying
Higgs vector bundle $(E\, , \theta)$ over $X \setminus S$ such
that the isomorphism $\widetilde \varphi$ in \eqref{isomorphism}
takes $D$ to the connection on $L_\ast \tensor E_\ast^\ast$ induced
by $\nabla_L$ and the dual connection $D^\ast$ on
$E_\ast^\ast$ for $D$.
\end{definition}

\begin{theorem} \label{result}
Let $(E_\ast\, , \varphi\, , \theta)$ be a polystable orthogonal or
symplectic parabolic Higgs bundle on $X$. Then $(E_\ast\, ,
\varphi\, , \theta)$ admits a Hermitian-Einstein connection.
\end{theorem}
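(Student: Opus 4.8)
The plan is to reduce to the Hitchin-Kobayashi correspondence for ordinary parabolic Higgs bundles (Theorem \ref{simpson}) and then to use the rigidity statement of Proposition \ref{uniqueness} to make the resulting connection compatible with $\varphi$.

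First I would pass to the underlying parabolic Higgs bundle. By Proposition \ref{polystable}, the parabolic Higgs bundle $(E_\ast\, ,\theta)$ is polystable; write $(E_\ast\, ,\theta)=\bigoplus_{k=1}^{N}(E^k_\ast\, ,\theta^k)$ with each $(E^k_\ast\, ,\theta^k)$ stable. Since $(E_\ast\, ,\theta)$ is in particular semistable, a standard slope computation shows that every summand $E^k_\ast$ has parabolic slope $\parmu(E_\ast)$, so the Einstein factor produced by Theorem \ref{simpson} is the same real number $\lambda$ for all $k$. Applying Theorem \ref{simpson} to each stable summand gives a Hermitian-Einstein metric $h_k$ on $(E^k\, ,\theta^k)$ over $X\setminus S$ which is mutually bounded with the background metric $h_0(E^k_\ast\, ,\theta^k)$. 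Because the Higgs field $\theta$ is block diagonal for this decomposition and all Einstein factors coincide, $h:=\bigoplus_k h_k$ is a Hermitian-Einstein metric on $(E\, ,\theta)$ over $X\setminus S$; moreover $h$ is mutually bounded with $h_0(E_\ast\, ,\theta)$, because the background metric construction of \cite{Si90} is compatible with direct sums. Let $D$ be the Chern connection of $h$.

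The main step is to check that $D$ is a Hermitian-Einstein connection on $(E_\ast\, ,\varphi\, ,\theta)$, i.e.\ that $\widetilde\varphi$ carries $D$ to the connection on $L_\ast\otimes E_\ast^\ast$ induced by $\nabla_L$ and the dual connection $D^\ast$. Since $\theta$ is compatible with $\varphi$ (Definition \ref{compatible-1}), the map $\widetilde\varphi$ of \eqref{isomorphism} is an isomorphism of Higgs bundles from $(E\, ,\theta)$ onto $(L\otimes E^\ast\, ,\theta')$, where $\theta'$ denotes the Higgs field induced by $\theta$ (with zero Higgs field on $L_\ast$). Consider the metric $h_L\otimes h^\ast$ on $L\otimes E^\ast$: it is Hermitian-Einstein for $(L\otimes E^\ast\, ,\theta')$, with Einstein factor necessarily equal to $\lambda$ again, because $\widetilde\varphi$ being an isomorphism forces $\parmu(L_\ast)=2\,\parmu(E_\ast)$ and the Einstein factor is determined by the parabolic slope. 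Using that $h_L$ is mutually bounded with $h_0(L_\ast)$, that $h$ is mutually bounded with $h_0(E_\ast\, ,\theta)$, and that the background metric construction is compatible with parabolic duals and with parabolic tensor products up to mutual boundedness, one finds that $h_L\otimes h^\ast$ is mutually bounded with $h_0(L_\ast\otimes E_\ast^\ast\, ,\theta')$. Pulling $h_L\otimes h^\ast$ back along the isomorphism $\widetilde\varphi$ produces a Hermitian-Einstein metric $h'$ on $(E\, ,\theta)$ which, by functoriality of the background metric construction under $\widetilde\varphi$, is again mutually bounded with $h_0(E_\ast\, ,\theta)$, hence mutually bounded with $h$. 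By Proposition \ref{uniqueness}, the Chern connections of $h$ and of $h'$ agree. But the Chern connection of $h'$ is, by construction, the $\widetilde\varphi$-pullback of the Chern connection of $h_L\otimes h^\ast$, which is the tensor product connection built from $\nabla_L$ and $D^\ast$. Hence $\widetilde\varphi$ carries $D$ to this connection, which is exactly the required compatibility.

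I expect this second step to be the main obstacle: a Hermitian-Einstein metric obtained naively from a polystable splitting of $(E_\ast\, ,\theta)$ need not be adapted to the form $\varphi$, and the only available leverage is the uniqueness of Proposition \ref{uniqueness}. The delicate part is the bookkeeping with \emph{mutual boundedness} --- one must verify that the direct sum metric $h$, the metric $h_L\otimes h^\ast$, and its pullback $h'$ all remain mutually bounded with the canonical background metric, which rests on the compatibility of Simpson's construction with direct sums, duals, tensor products and isomorphisms --- and one must observe that the Einstein factors in play coincide automatically because they are pinned down by the parabolic slope.
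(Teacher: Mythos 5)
Your proposal is correct and follows essentially the same route as the paper: produce a Hermitian--Einstein metric $h$ on the polystable $(E_\ast\,,\theta)$ mutually bounded with Simpson's background metric, then compare, via Proposition \ref{uniqueness}, the two mutually bounded Hermitian--Einstein metrics related by $\widetilde\varphi$ (the paper compares them on $L_\ast\tensor E_\ast^\ast$, you pull back to $E$ -- an immaterial difference). Your extra details -- the explicit direct-sum extension of Theorem \ref{simpson} to the polystable case and the observation that the Einstein factors agree because $\parmu(L_\ast)=2\parmu(E_\ast)$ -- are points the paper leaves implicit, and they are handled correctly.
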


\begin{proof}
Since $(E_\ast\, , \varphi\, , \theta)$ is polystable, the
parabolic Higgs bundle $(E_\ast\, , \theta)$ is polystable by Proposition \ref{polystable}. By Theorem \ref{simpson}, there is a Hermitian-Einstein metric $h$ on $(E\, , \theta)$ over $X \setminus S$ such that $h$ and $h_0(E_\ast\, , \theta)$ are mutually bounded.

We have to show that the Chern connection $D$ on $E_\ast$
associated to $h$ is 
a Hermitian-Einstein connection on $(E_\ast\, , \varphi\, , \theta)$.
By Lemma \ref{higgs}, the Higgs field
$\theta$ induces a Higgs field $\widetilde 
\theta$ on the parabolic vector bundle $L_\ast \tensor 
E_\ast^\ast$. The Hermitian metric $h'$ on $L_\ast \tensor 
E_\ast^\ast$ induced by $h_L$ and $h$ is a Hermitian-Einstein 
metric on $(L_\ast \tensor E_\ast^\ast\, , \widetilde \theta)$. As 
$h$ and $h_0(E_\ast\, , \theta)$ are mutually bounded, and the 
construction of the background metric is compatible with taking 
duals and tensor products up to mutual boundedness, it follows 
that $h'$ and $h_0(L_\ast \tensor E_\ast^\ast\, , \widetilde \theta)$ 
are mutually bounded.

On the other hand, the Hermitian metric $h''$ on $L_\ast \tensor E_\ast^\ast$
given by the isomorphism $\widetilde \varphi$ in 
\eqref{isomorphism} is also a Hermitian-Einstein metric on 
$(L_\ast \tensor E_\ast^\ast\, , \widetilde \theta)$ because the Higgs 
field $\theta$ is compatible with the orthogonal or symplectic 
structure $\varphi$ (see Definition \ref{compatible}). As 
$\widetilde \varphi$ is an isomorphism of parabolic bundles, the 
metrics $h''$ and $h_0(L_\ast \tensor E_\ast^\ast\, , \widetilde 
\theta)$ are mutually bounded.

By Proposition \ref{uniqueness} it follows that the
Chern connections of $h'$ and $h''$ coincide. This means that the
Chern connection $D$ associated to $h$ is a Hermitian-Einstein connection on $(E_\ast\, , \varphi\, , \theta)$.
\end{proof}

There is also a converse to Theorem \ref{result}. For this, one has
to impose a condition on the asymptotic behavior of the Hermitian-Einstein connection near the parabolic divisor $S$. In \cite{Po93}, Poritz defines a space
\begin{equation} \label{poritz}
  \A \,= \, \A_\D^\delta
\end{equation}
of connections depending on the parabolic structure of $E_\ast$ (see \cite[Definition 3.2]{Po93}). Using this definition, we have:

\begin{proposition} \label{converse}
Let $(E_\ast\, , \varphi\, , \theta)$ be an orthogonal or symplectic
parabolic Higgs bundle on $X$. If $E$ admits a
Hermitian-Einstein connection lying in the space $\A$, then
it is polystable.
\end{proposition}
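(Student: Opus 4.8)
The plan is to reduce the statement to the known Hitchin–Kobayashi correspondence for ordinary parabolic Higgs bundles, namely the converse direction implicit in Theorem \ref{simpson} together with the uniqueness from Proposition \ref{uniqueness}, and then to recover the extra structure (the isotropic-subbundle polystability of Definition \ref{compatible}) from the compatibility of the connection with $\widetilde\varphi$. First I would recall the work of Poritz \cite{Po93}: a Hermitian-Einstein connection on $E$ lying in $\A$ forces $E_\ast$ (with its induced parabolic and Higgs structure) to be polystable as a parabolic Higgs bundle. This is the ordinary converse statement, and it handles the non-orthogonal/symplectic part of the conclusion. So after this step we know $(E_\ast\, , \theta)$ is polystable, hence semistable; in particular $\parmu(F_\ast)\, \leq\, \parmu(E_\ast)$ for every $\theta$-invariant parabolic subbundle $F_\ast$, and this holds \emph{a fortiori} for isotropic such subbundles. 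This already gives semistability in the sense of Definition \ref{ds}.

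The remaining task is to upgrade semistability to polystability in the orthogonal/symplectic sense, i.e.\ to produce a decomposition into stable pieces and pieces of the form \eqref{dpH}. Here I would run the socle-filtration argument of Proposition \ref{polystable} in reverse. Since $(E_\ast\, , \theta)$ is polystable as a parabolic Higgs bundle, it has a socle $(\widehat V_\ast\, , \theta_{\widehat V})$; if $\widehat V_\ast \,=\, E_\ast$ then $(E_\ast\, , \theta)$ is already a direct sum of stable parabolic Higgs bundles, and I would group the isotropic summands with their $\widetilde\varphi$-images to realize $(E_\ast\, , \varphi)$ as a sum of stable orthogonal/symplectic pieces and pieces $V_\ast\oplus(L_\ast\otimes V^\ast_\ast)$. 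If $\widehat V_\ast\, \subsetneq\, E_\ast$, then exactly as in the proof of Proposition \ref{polystable} — using the compatibility of $\widetilde\varphi$ with the socle filtration and the slope identities for duals and tensor products — the first step $V^1_\ast\, =\, V_\ast$ of the socle filtration is an isotropic subbundle with $\parmu(V_\ast)\,=\,\parmu(E_\ast)$ preserved by $\theta$. The key extra input now is that the Hermitian-Einstein connection $D$ splits: because $D\, \in\, \A$ is Hermitian-Einstein and $V_\ast$ is a $\theta$-invariant parabolic subbundle of the same parabolic slope, the $h$-orthogonal complement $V_\ast^{\perp_h}$ is also $D$-parallel and $\theta$-invariant, giving a $D$-orthogonal, $\theta$-compatible splitting $E_\ast\,=\,V_\ast\oplus V_\ast^{\perp_h}$; matching this against the pairing $\varphi$ and using that $V_\ast$ is isotropic identifies $V_\ast^{\perp_h}$ with $(L_\ast\otimes V^\ast_\ast)\oplus W_\ast$ for an orthogonal/symplectic parabolic Higgs bundle $(W_\ast\, ,\phi\, ,\alpha)$, exactly yielding \eqref{dec}. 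Then $V_\ast$ itself, being a summand, carries the restricted Hermitian-Einstein connection, which still lies in the corresponding Poritz space, so by induction on the rank $(V_\ast\, ,\theta_V)$ is polystable, and $(W_\ast\, ,\phi\, ,\alpha)$ again admits a Hermitian-Einstein connection in $\A$ of the required type, so induction applies to it as well.

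I expect the main obstacle to be the splitting/matching step: extracting from a single Hermitian-Einstein connection in $\A$ the orthogonal decomposition \eqref{dec} with all three structures ($\varphi$, $\theta$, and the connection) respected, and in particular verifying that the restriction of $D$ to the summands again lies in the appropriate Poritz spaces so that the induction closes. This requires checking that $\widetilde\varphi$ intertwines $D$ with the induced connection on $L_\ast\otimes E^\ast_\ast$ — which is precisely the defining property of a Hermitian-Einstein connection on $(E_\ast\, ,\varphi\, ,\theta)$ — and then transporting the decomposition through $\widetilde\varphi$; the parabolic-analytic bookkeeping near $S$ (mutual boundedness of background metrics, behaviour of $\A$ under restriction to a subbundle and under the operations $V_\ast\mapsto L_\ast\otimes V^\ast_\ast$) is where the real care is needed, the rest being a transcription of Proposition \ref{polystable}.
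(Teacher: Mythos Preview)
The paper's own proof is a single sentence: ``The proof of \cite[Theorem 6.4]{Po93} immediately generalizes to our situation.'' That is, Poritz's argument that a parabolic bundle carrying a Hermitian--Einstein connection in $\A$ is polystable is asserted to go through verbatim once one adds the Higgs term and the orthogonal/symplectic form; no separate reduction to the ordinary parabolic Higgs case is made.

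Your route is genuinely different---you first invoke a Higgs version of Poritz to conclude that $(E_\ast,\theta)$ is polystable as a parabolic Higgs bundle, and then try to promote this to polystability in the orthogonal/symplectic sense. The first step is fine in spirit (and is itself a special case of what the paper is claiming). The second step, however, is where your plan breaks down. Once Step~1 tells you that $(E_\ast,\theta)$ is polystable, its socle is \emph{all} of $E_\ast$: by definition the socle is the maximal polystable Higgs subbundle of the same slope, and a polystable object is its own socle. So your case ``$\widehat V_\ast\subsetneq E_\ast$'' is vacuous, and the entire socle-filtration machinery you import from Proposition~\ref{polystable}---including the $h$-orthogonal splitting, the identification of $V^{\perp_h}_\ast$, and the induction on rank---never gets used. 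All of the content therefore lives in your first case, where you write only that you ``would group the isotropic summands with their $\widetilde\varphi$-images''.

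That grouping is precisely the nontrivial point, and you have not supplied it. What is needed is: write $(E_\ast,\theta)=\bigoplus_i (E^i_\ast,\theta^i)^{\oplus m_i}$ with pairwise non-isomorphic stable summands, observe that $\widetilde\varphi$ is an isomorphism of parabolic \emph{Higgs} bundles onto $L_\ast\otimes E^\ast_\ast$ (this uses compatibility of $\theta$ with $\varphi$, not the Hermitian--Einstein connection), apply Schur's lemma for stable parabolic Higgs bundles to see that $\widetilde\varphi$ acts by a block matrix between isotypic components, and then diagonalise each block (symmetric or skew according to whether the isotypic type is self-dual up to $L_\ast$) to produce summands that are either stable orthogonal/symplectic or of the form \eqref{dpH}. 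None of this uses the socle filtration or the metric splitting you describe. So as written your proposal locates the work in the wrong place and leaves the actual argument as a one-line sketch; either carry out the grouping argument explicitly, or---closer to the paper---argue directly that Poritz's proof of \cite[Theorem~6.4]{Po93} goes through with the Higgs and $\varphi$-structures carried along.
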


\begin{proof}
The proof of \cite[Theorem 6.4]{Po93} immediately generalizes to our situation.
\end{proof}

\end{document}